\newtheorem{thm}{Theorem}[section] 
\newtheorem{cor}[thm]{Corollary}
\newtheorem{lem}[thm]{Lemma}
\theoremstyle{definition}
\newtheorem{rem}[thm]{Remark}
\newcommand\operA[2]{{\if!#2!\operatorname{#1}\else{\operatorname{#1}_{#2}^{\phantom{I}}}\fi}} 
\newcommand{\Trace}[1][]{\if!#1!\operatorname{Tr}\else{\operatorname{Tr}_{#1}^{\phantom{I}}}\fi} 
\long\def\forget#1\forgotten{{}} %
\def\({\left(}
\def\){\right)}
\newcommand\LAY[3][]{{\begin{array}{c}\mbox{#2} \if#1!{}\else{+}\fi \\ \mbox{#3}\end{array}}}
\def\ps@pprintTitle{%
 \let\@oddhead\@empty
 \let\@evenhead\@empty
 \def\@oddfoot{}%
 \let\@evenfoot\@oddfoot}
\newcommand{\bigperp}{%
  \mathop{\mathpalette\bigp@rp\relax}%
  \displaylimits
}
\newcommand{\bigp@rp}[2]{%
  \vcenter{
    \m@thHbox{\scalebox{\ifx#1\displaystyle2.1\else1.5\fi}{$#1\perp$}}
  }%
}
\renewcommand{\geq}{\geqslant}
\renewcommand{\leq}{\leqslant}
\newif\iffurther
\begin{document}
\begin{frontmatter}

\title{Classes in $H_{p^m}^{n+1}(F)$ of lower exponent}

\author{Adam Chapman}
\ead{adam1chapman@yahoo.com}
\address{School of Computer Science, Academic College of Tel-Aviv-Yaffo, Rabenu Yeruham St., P.O.B 8401 Yaffo, 6818211, Israel}

\author{Daniel Krashen}
\ead{dkrashen@upenn.edu}
\address{Department of Mathematics, University of Pennsylvania, Philadelphia, Pennsylvania}

\author{Kelly McKinnie}
\address{Department of Mathematical Sciences, University of Montana, Missoula, MT 59812, USA}
\ead{kelly.mckinnie@umontana.edu}

\begin{abstract}
Let $F$ be a field of characteristic $p>0$. We prove that if a symbol $A=\omega \otimes \beta_1 \otimes \dots \otimes \beta_n$ in $H_{p^m}^{n+1}(F)$ is of exponent dividing $p^{m-1}$, then its symbol length in $H_{p^{m-1}}^{n+1}(F)$ is at most $p^n$. In the case $n=2$ we also prove that if $A= \omega_1\otimes \beta_1+\cdots+\omega_r\otimes \beta_r$ in $H_{p^{m}}^2(F)$
satisfies $\exp(A)|p^{m-1}$, then the symbol length of $A$ in $H_{p^{m-1}}^2(F)$ is at most $p^r+r-1$. We conclude by looking at the case $p=2$ and proving that if $A$ is a sum of two symbols in $H_{2^m}^{n+1}(F)$ and $\exp A |2^{m-1}$, then the symbol length of $A$ in $H_{2^{m-1}}^{n+1}(F)$ is at most $(2n+1)2^n$. Our results use norm conditions in characteristic $p$ in the same manner as Matrzi in his paper ``On the symbol length of symbols''.

\end{abstract}

\begin{keyword}
Cyclic Algebras, Brauer Group, Kato-Milne Cohomology, Symbol Length, Fields of Positive Characteristic
\MSC[2020] 19D45 (primary); 11E04, 11E81, 16K20 (secondary)
\end{keyword}
\end{frontmatter}

\section{Introduction}

In \cite{Matzri:2024} it was proven that when $p$ is a prime integer and $F$ is a $p$-special field of $\operatorname{char}(F)=0$, every symbol $A=(a_1,\dots,a_n,a_{n+1})_{p^m,F} \in H^{n+1}(F,\mu_{p^m}^{\otimes n})$ of $\exp(A) | p^{m-1}$ is of symbol length at most $p^n$ in $H^{n+1}(F,\mu_{p^{m-1}}^{\otimes n})$. We wish to prove the analogous result for $H^{n+1}_{p^m}(F)$ when $\operatorname{char}(F)=p$. We start with a simple lemma that allows us to deduce the required norm condition. 

\begin{lem}\label{ourlemma}
Suppose $\beta_1,\dots,\beta_n \in F^\times$ are not $p$-independent. Then, the form 
\[\sum_{\vec{d}\in V \setminus \{\vec{0}\}} \beta_1^{d_1} \dots \beta_n^{d_n} x_{\vec{d}}^p\] 
is universal in $F^p$, i.e., every element $\gamma^p$ in $F^p$ is represented by this form.
\end{lem}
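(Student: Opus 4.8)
The plan is to determine exactly which elements of $F$ the form represents, and then to reduce the universality statement to a single membership. First I would note that as each $x_{\vec d}$ ranges over $F$, the power $x_{\vec d}^p$ ranges over all of $F^p$; hence the set of values taken by the form is precisely the $F^p$-subspace $U = \sum_{\vec d \in V\setminus\{\vec 0\}} F^p\,\beta_1^{d_1}\cdots\beta_n^{d_n}$ of $F$ spanned by the \emph{nonconstant} monomials in the $\beta_i$. Since $U$ is an $F^p$-vector space and $F^p = F^p\cdot 1$, the claim that every $\gamma^p$ is represented is equivalent to the single statement $1 \in U$: if $1\in U$ then $F^p\subseteq U$ and every $\gamma^p$ is a value, while conversely $1=1^p$ being a value forces $1\in U$. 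So the whole lemma reduces to proving $1\in U$.

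Next I would extract a usable consequence of the hypothesis. Because $\beta_1,\dots,\beta_n$ are not $p$-independent, a maximal $p$-independent subset is proper, so after reindexing there is an index $j$ with $\beta_j \in F^p(\beta_i : i \neq j)$; this uses the standard dependence property that $\beta_j$ has degree $1$ or $p$ over $F^p(\beta_i:i\neq j)$ (as $\beta_j^p\in F^p$), so failure of independence forces degree $1$. Concretely, $\beta_j = \sum_{\vec e} a_{\vec e}\prod_{i\neq j}\beta_i^{e_i}$ for some $a_{\vec e}\in F^p$ with $0\le e_i <p$. The key step is then a multiplication trick: multiply this expression by $\beta_j^{p-1}$. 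The left-hand side becomes $\beta_j^p\in F^p$, while each monomial on the right acquires exponent $p-1$ on $\beta_j$ and exponents $e_i<p$ on the remaining variables, so it is a nonconstant monomial $\beta_1^{d_1}\cdots\beta_n^{d_n}$ with $\vec d\in V\setminus\{\vec 0\}$ (indeed $d_j=p-1\neq 0$) and with \emph{no reduction of exponents needed}. This exhibits $\beta_j^p$ as an element of $U$. Since $\beta_j\in F^\times$, the scalar $\beta_j^{-p}$ lies in $F^p$, and multiplying $\beta_j^p\in U$ by it gives $1\in U$, which by the first paragraph completes the proof.

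The only real content, and the step I would treat most carefully, is the translation of ``not $p$-independent'' into the existence of a single index $j$ with $\beta_j\in F^p(\beta_i:i\neq j)$, together with the observation that multiplying by $\beta_j^{p-1}$ lands precisely in the span of the nonconstant monomials without ever producing a constant term. Everything else is short linear-algebra bookkeeping over the field $F^p$. I do not anticipate a genuine obstacle beyond being careful that the multiplication trick is applied to an element expressed in the remaining variables only, so that the fixed exponent $p-1$ on $\beta_j$ guarantees $\vec d\neq\vec 0$.
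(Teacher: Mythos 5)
Your proof is correct, but it takes a genuinely different route from the paper's. The paper argues directly from the dependence relation: non-$p$-independence gives $\sum_{\vec d \in V} \beta_1^{d_1}\cdots\beta_n^{d_n}\, x_{\vec d}^p = 0$ with the $x_{\vec d}$ not all zero; multiplying through by $(\beta_1^{d_1}\cdots\beta_n^{d_n})^{-1}$ for some $\vec d$ with $x_{\vec d}\neq 0$ yields a relation of the same shape in which the constant coefficient $x_{\vec 0}$ is nonzero, after which one solves for $-x_{\vec 0}^p$ and rescales each variable by $-\gamma/x_{\vec 0}$ to represent $\gamma^p$ directly---no field theory and no reduction to representing $1$. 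You instead (i) identify the value set of the form as the $F^p$-subspace $U$ spanned by the nonconstant monomials, so that universality becomes the single statement $1\in U$, and (ii) invoke the tower/degree characterization of $p$-dependence to produce an index $j$ with $\beta_j\in F^p(\beta_i : i\neq j)$, then multiply by $\beta_j^{p-1}$. Each route has a merit the other lacks. Your multiplication by $\beta_j^{p-1}$ keeps every exponent inside $\{0,\dots,p-1\}$ with $d_j=p-1\neq 0$, whereas the paper's multiplication by an inverse monomial creates negative exponents that must be silently reduced mod $p$ (absorbing the resulting $p$-th powers $\beta_i^{\pm p}$ into the coefficients) for the new equation to be ``of the same type''---a step the paper leaves implicit and your argument avoids entirely. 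Conversely, the paper needs nothing beyond the dependence relation itself, while you must justify the exchange-type statement that failure of $p$-independence forces some $\beta_j$ into $F^p(\beta_i : i\neq j)$; your sketch of this (each step of the tower $F^p\subseteq F^p(\beta_1)\subseteq\cdots$ has degree $1$ or $p$, and the total degree is $<p^n$, so some step has degree $1$) is correct and standard, but it is an extra ingredient that the paper's purely computational manipulation does not require.
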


\begin{proof}
The $p$-dependence means $\sum_{\vec{d} \in V} \beta_1^{d_1} \dots \beta_n^{d_n} x_{\vec{d}}^p=0$ for some $x_{\vec{d}} \in F$, not all zero.
Take $\vec{d} \in V$ for which $x_{\vec{d}} \neq 0$, and multiply the equation by $(\beta_1^{d_1}\dots\beta_n^{d_n})^{-1}$. This gives a new equation of the same type with $x_{\vec{0}}\neq 0$.
Then
$\sum_{\vec{d}\in V \setminus \{\vec{0}\}} \beta_1^{d_1} \dots \beta_n^{d_n} x_{\vec{d}}^p=-x_{\vec{0}}^p$, which means 
\[\sum_{\vec{d}\in V \setminus \{\vec{0}\}} \beta_1^{d_1} \dots \beta_n^{d_n} (-\gamma{x_{\vec{d}}}/{x_{\vec{0}}})^p=\gamma^p.\]
 \end{proof}

In our case, there is no need to assume $F$ is $p$-special because of the following norm condition for splitness of symbols in Kato-Milne cohomology in characteristic $p$:

\begin{lem}[Essentially {\cite{ArasonBaeza:2010}}]\label{ABlem}
The symbol $\alpha \otimes \beta_1 \otimes \dots \otimes \beta_n$ is trivial in $H_p^{n+1}(F)$ if and only if 
$$\alpha=\lambda^p-\lambda+\sum_{\begin{matrix}0\leq d_1,\dots,d_n \leq p-1\\ (d_1,\dots,d_n)\neq (0,\dots,0)\end{matrix}} z_{d_1,\dots,d_n}^p \beta_1^{d_1} \dots \beta_n^{d_n}$$ for some $\lambda$ and $z_{d_1,\dots,d_n} \in F$.
In particular, $[\alpha,\beta)_{p,F}$ is split when $\alpha=\lambda^p-\lambda+ z_1^p \beta+\dots+z_{p-1}^p \beta^{p-1}$ for some $\lambda,z_1,\dots,z_{p-1}\in F$.
\end{lem}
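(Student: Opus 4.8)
The plan is to work in the standard differential-forms model of Kato--Milne cohomology: writing $\Omega^n_F$ for the absolute Kähler $n$-forms and $C$ for the Cartier operator, one has
\[H_p^{n+1}(F)\;\cong\;\Omega^n_F\big/\bigl(d\Omega^{n-1}_F+(C^{-1}-\mathrm{id})\Omega^n_F\bigr),\]
under which the symbol $\alpha\otimes\beta_1\otimes\dots\otimes\beta_n$ is the class of $\alpha\,\omega$, where $\omega=\frac{d\beta_1}{\beta_1}\wedge\dots\wedge\frac{d\beta_n}{\beta_n}$. The one computational input I will use repeatedly is that logarithmic forms are fixed by $C^{-1}$, so that $(C^{-1}-\mathrm{id})(x\,\omega)=(x^p-x)\,\omega$ for every $x\in F$; write $\wp(x)=x^p-x$.

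For the backward (``if'') direction I would treat the two kinds of summands separately. The term $(\lambda^p-\lambda)\,\omega=(C^{-1}-\mathrm{id})(\lambda\,\omega)$ lies in the image of $C^{-1}-\mathrm{id}$ and hence dies in cohomology. For a monomial term with $\vec d\neq\vec 0$, say $d_i\neq 0$, I claim $z^p\beta_1^{d_1}\dots\beta_n^{d_n}\,\omega$ is exact: since $d(z^p)=0$ and $d(\beta_1^{d_1}\dots\beta_n^{d_n})$ wedged against the remaining logarithmic factors leaves only the $i$-th term, one checks it equals $d\eta$, where $\eta$ is $\tfrac1{d_i}z^p\beta_1^{d_1}\cdots\beta_n^{d_n}$ multiplied by the logarithmic $(n-1)$-form obtained from $\omega$ by deleting its $i$-th factor. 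Summing, the displayed $\alpha$ produces the trivial class, and the final ``in particular'' assertion is just the case $n=1$.

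The forward (``only if'') direction is the heart of the matter, and I would split it according to whether $\beta_1,\dots,\beta_n$ are $p$-dependent. If they are $p$-dependent then $d\beta_1\wedge\dots\wedge d\beta_n=0$, so $\omega=0$ and the symbol is automatically trivial; I must nevertheless produce the representation of an arbitrary $\alpha$. Here \Lref{ourlemma} does the work: universality gives $1=1^p\in\sum_{\vec d\neq\vec 0}F^p\beta_1^{d_1}\dots\beta_n^{d_n}$, so this $F^p$-span already contains $F^p$; taking $\lambda=-\alpha$ makes $\alpha-(\lambda^p-\lambda)=\pm\alpha^p\in F^p$ lie in the span, which is exactly the required form.

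The remaining, genuinely cohomological case is when $\beta_1,\dots,\beta_n$ are $p$-independent, and this is where I would invoke Arason--Baeza. Concretely I would extend $\{\beta_i\}$ to a $p$-basis of $F$, use the resulting $F$-basis of $\Omega^n_F$ given by wedges of the $db$'s, and compute the subgroup $d\Omega^{n-1}_F+(C^{-1}-\mathrm{id})\Omega^n_F$ in these coordinates. Intersecting it with the line $F\cdot\omega$ should return precisely $\bigl(\wp(F)+\sum_{\vec d\neq\vec 0}F^p\beta_1^{d_1}\dots\beta_n^{d_n}\bigr)\omega$, which rearranges to the stated identity for $\alpha$. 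I expect this coordinate computation---pinning down the exact image of the inverse Cartier operator against a $p$-basis, and checking that no further elements of $F$ become admissible---to be the main obstacle; it is exactly the content extracted from \cite{ArasonBaeza:2010}, so in practice I would cite their description rather than reprove it from scratch.
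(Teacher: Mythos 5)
Your proposal is correct and follows essentially the same route as the paper: the genuinely cohomological ($p$-independent) case is delegated to Arason--Baeza exactly as the paper does, and the $p$-dependent case is handled via Lemma~\ref{ourlemma} with $\lambda=-\alpha$, which is precisely the paper's argument. Your extra work---the explicit differential-forms verification of the ``if'' direction and the observation that $p$-dependence forces $\omega=0$ so the symbol is automatically trivial---is detail the paper leaves implicit in the citation, and it is a welcome (correct) supplement rather than a different approach.
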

\begin{proof} The remark in the cited paper covers the case when $\beta_1,\ldots,\beta_n$ are $p$-independent. When the $\beta_i$'s are not $p$-independent, Lemma \ref{ourlemma} can be applied so that the $z$'s are chosen so that 
\[\sum_{\begin{matrix}0\leq d_1,\dots,d_n \leq p-1\\ (d_1,\dots,d_n)\neq (0,\dots,0)\end{matrix}} z_{d_1,\dots,d_n}^p \beta_1^{d_1} \dots \beta_n^{d_n}=\alpha^p.\]
Taking $\lambda = -\alpha$, we get our desired result.

\end{proof}

This norm condition for the splitness of symbols in $H_p^{n+1}(F)$ allows us to prove a symbol length bound in $H^{n+1}_{p^{m-1}}(F)$ (Theorem \ref{T3}) in a similar way to what was done in \cite{ChapmanMcKinnie:2023} and \cite{ChapmanFlorenceMcKinnie:2023}. The same tool allows us to prove that if $A=\omega_1\otimes \beta_1+\cdots+\omega_r\otimes \beta_r$ in $H_{p^{m}}^2(F)$ satisfies $\exp(A)|p^{m-1}$, then the symbol length of $A$ in $H_{p^{m-1}}^2(F)$ is at most $p^r+r-1$ (Theorem \ref{T4}).
We finish by providing the analogous result to \cite[Section 9]{Matzri:2024}; If $A$ is a sum of two symbols in $H_{2^m}^{n+1}(F)$ and $\exp A |2^{m-1}$, then the symbol length of $A$ in $H_{2^{m-1}}^{n+1}(F)$ is at most $(2n+1)2^n$ (Theorem \ref{T5}).
\section{Preliminaries}

Let $p$ be a prime integer and $F$ a field of $\operatorname{char}(F)= p$.
We denote by $W_mF$ the ring of truncated Witt vectors over $F$ of length $m$.
Following \cite{Izhboldin:2000} and \cite{AravireJacobORyan:2018}, the cohomology groups $H_{p^m}^{n+1}(F)$ are defined to be $W_m F\otimes \underbrace{F^\times \otimes \dots \otimes F^\times}_{n \ \text{times}}$ modulo the relations 
\begin{itemize}
\item $(0,\dots,0,\beta_i,0,\dots,0) \otimes \beta_1 \otimes \dots \otimes \beta_n=0$, 
\item $\omega \otimes \beta_1 \otimes \dots \otimes \beta_n$ is trivial when $\beta_i=\beta_j$ for some $i \neq j$, and 
\item $(\omega^p-\omega) \otimes \beta_1 \otimes \dots \otimes \beta_n=0$ where $\omega^p$ is the Witt vector $(\omega_1^p,\dots,\omega_m^p)$ obtained by raising to the $p$th power each slot of the Witt vector $\omega=(\omega_1,\dots,\omega_m)$.
\end{itemize}

Note that because of the tensor product definition of the symbols
\begin{itemize}
\item $\omega \otimes \beta_1\otimes \cdots \otimes \beta_n + \omega' \otimes \beta_1\otimes \cdots \otimes \beta_n =
(\omega + \omega') \otimes \beta_1\otimes \cdots \otimes \beta_n$
\item $\omega \otimes \beta_1\otimes\cdots \otimes \beta_i\beta_i'\otimes \cdots \otimes \beta_n = 
\omega \otimes \beta_1\otimes\cdots \otimes \beta_i \otimes \cdots \otimes \beta_n + 
\omega \otimes \beta_1\otimes\cdots \otimes \beta_i' \otimes \cdots \otimes \beta_n$
\end{itemize}
Each $\omega \otimes \beta_1 \otimes \dots \otimes \beta_n$ is a ``symbol", and the symbol length of a class in $H^{n+1}_{p^m}(F)$ is the minimal number of symbols required to express it.

The following sequence is exact \cite[Theorem 2.31 together with Remark 2.32]{AravireJacobORyan:2018}: 
\begin{equation}0 \rightarrow H_{p^m}^{n+1}(F) \rightarrow H_{p^{m+\ell}}^{n+1}(F) \rightarrow H_{p^\ell}^{n+1}(F) \rightarrow 0
\label{e1}
\end{equation}
where the injective map sends each $(\omega_1,\dots,\omega_m) \otimes \beta_1 \otimes \dots \otimes \beta_n$ to $(\underbrace{0,\dots,0}_{\ell \ \text{times}},\omega_1,\dots,\omega_m) \otimes \beta_1 \otimes \dots \otimes \beta_n$ and the surjective map sends each $(\omega_1,\dots,\omega_{m+\ell})\otimes \beta_1 \otimes \dots \otimes \beta_n$ to $(\omega_1,\dots,\omega_\ell) \otimes \beta_1 \otimes \dots \otimes \beta_n$.
The group $H_{p^m}^{n+1}(F)$ can therefore be considered a subgroup of $H_{p^{m+\ell}}^{n+1}(F)$, and when $H_{p^\ell}^{n+1}(F)$ is considered as such in the sequence above, the surjective map coincides with the exponentiation map taking each symbol $S$ to $\underbrace{S+\dots+S}_{p^m \ \text{times}}$. One can therefore consider the symbol length in $H_{p^\ell}^{n+1}(F)$ of a class in $H_{p^{m+\ell}}^{n+1}(F)$ of exponent $p^\ell$.

These groups coincide with familiar groups for special $m$ and $n$. In particular, $H_{p^m}^2(F) \cong {_{p^m}Br}(F)$ with $\omega \otimes \beta \mapsto [\omega,\beta)_{p^m,F}$, where the latter stands for the cyclic algebra generated over $F$ by $\theta_1,\dots,\theta_m,y$ subject to the relations 
\[\theta^p-\theta=\omega, \, y^{p^m}=\beta \textrm{, and } (y \theta_1 y^{-1},y \theta_2 y^{-1}\dots,y \theta_m y^{-1})=\theta+(1,0,\dots,0)\]
where  $\theta=(\theta_1,\dots,\theta_m)$ and $\theta^p=(\theta_1^p,\dots,\theta_m^p)$ where the arithmetic operations (multiplication, addition and subtraction) obey the rules of Witt vectors.

\section{Single symbols in $H_{p^m}^{n+1}(F)$}

We begin with a lemma outlining the rules with which we manipulate the symbols in $H_{p^m}^{n+1}(F)$ in the proof of Theorem \ref{T3}.

\begin{lem}\label{ourlemma2} Given $\vec{d}\in \{0,1,\dots,p^m-1\}^{\times n}$ where $d_i$ is prime to $p$ at least for one $i$ in $\{1,\dots,n\}$, the symbol $\omega \otimes \beta_1 \otimes \beta_2 \otimes \dots \otimes \beta_n$ in $H_{p^m}^{n+1}(F)$ can be written as $\omega \otimes (\prod_{k=1}^n \beta_k^{d_k}) \otimes \gamma_2 \otimes \dots \otimes \gamma_n$ for some $\gamma_2,\dotsm,\gamma_n$.
\end{lem}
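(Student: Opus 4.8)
The plan is to treat the symbol as a multilinear, alternating function of its $\beta$-slots, and to realize the passage from $\beta_1$ to $\prod_{k=1}^n\beta_k^{d_k}$ as the action of an integer matrix whose first row is $\vec d$. The effect of such a matrix on the symbol will be multiplication by its determinant, and since the target group is $p^m$-torsion I only need to control that determinant modulo $p^m$.

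First I would record the two formal properties of the $\beta$-slots supplied by the defining relations. Multilinearity, $\omega\otimes\cdots\otimes\beta_i\beta_i'\otimes\cdots=\omega\otimes\cdots\otimes\beta_i\otimes\cdots+\omega\otimes\cdots\otimes\beta_i'\otimes\cdots$, is part of the definition. Anti-symmetry (transposing two slots negates the symbol) I would deduce from the relation that the symbol vanishes when $\beta_i=\beta_j$: expanding the vanishing symbol with $\beta_i\beta_j$ placed in both slots $i$ and $j$ by multilinearity, and discarding the two terms carrying a repeated slot, leaves $\omega\otimes\cdots\beta_i\cdots\beta_j\cdots+\omega\otimes\cdots\beta_j\cdots\beta_i\cdots=0$. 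In particular negative exponents are legitimate, since $\omega\otimes\cdots\beta_i^{-1}\cdots=-\,\omega\otimes\cdots\beta_i\cdots$.

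Next I would fix an integer matrix $M=(M_{jk})\in M_n(\mathbb{Z})$ and set $\gamma_j=\prod_{k=1}^n\beta_k^{M_{jk}}$. Expanding $\omega\otimes\gamma_1\otimes\cdots\otimes\gamma_n$ by multilinearity gives a sum over all maps $\{1,\dots,n\}\to\{1,\dots,n\}$; the alternating property kills every non-injective term, and anti-symmetry turns each permutation term into its sign times $\omega\otimes\beta_1\otimes\cdots\otimes\beta_n$, so the Leibniz formula yields
\[
\omega\otimes\gamma_1\otimes\cdots\otimes\gamma_n=\det(M)\,\bigl(\omega\otimes\beta_1\otimes\cdots\otimes\beta_n\bigr).
\]
Because $W_mF$ is annihilated by $p^m$ (the Verschiebung filtration has $m$ graded pieces isomorphic to $(F,+)$, each killed by $p$, and $p\cdot W_mF\subseteq V W_mF$), the group $H^{n+1}_{p^m}(F)$ is $p^m$-torsion; hence it suffices to produce such an $M$ with first row $\vec d$ and $\det(M)\equiv 1\pmod{p^m}$.

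The one step needing real work is this matrix completion, which I expect to be the main obstacle. Let $g=\gcd(d_1,\dots,d_n)$; since some $d_i$ is prime to $p$ and $g$ divides it, $g$ is prime to $p$. Writing $d_i=g e_i$, the row $(e_1,\dots,e_n)$ is unimodular, so it extends to a matrix in $SL_n(\mathbb{Z})$; rescaling its first row by $g$ gives an integer matrix with first row $\vec d$ and determinant $g$. Finally, choosing $h$ with $gh\equiv 1\pmod{p^m}$ (possible as $\gcd(g,p)=1$) and multiplying the second row by $h$ leaves the first row untouched and makes the determinant equal to $gh\equiv 1\pmod{p^m}$. Taking $M$ to be this matrix, the displayed identity reads $\omega\otimes\gamma_1\otimes\cdots\otimes\gamma_n=\omega\otimes\beta_1\otimes\cdots\otimes\beta_n$ with $\gamma_1=\prod_{k=1}^n\beta_k^{d_k}$, which is exactly the claim; everything outside the completion step is the standard ``symbols transform by the determinant'' computation together with the torsion of truncated Witt vectors.
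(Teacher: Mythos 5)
Your proof is correct, but it packages the argument differently from the paper. The paper works with explicit ``elementary slot operations'' (shown for $n=2$, with the general case asserted to follow readily): swapping two slots at the cost of an inverse, $\omega \otimes \beta_1 \otimes \beta_2=\omega \otimes \beta_2 \otimes \beta_1^{-1}$; rescaling exponents by a unit modulo $p^m$ using that the group is $p^m$-torsion, $\omega\otimes \beta_1^{d_1} \otimes \beta_2^{c} = \omega \otimes \beta_1 \otimes \beta_2$ when $d_1 c\equiv 1 \pmod{p^m}$; and absorbing one slot into another, $\omega \otimes \beta_1 \otimes \beta_2=\omega \otimes \beta_1 \beta_2^{d_2} \otimes \beta_2$. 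You instead prove a global statement --- symbols transform under an integer matrix $M$ acting on the $\beta$-slots by multiplication by $\det(M)$ --- and reduce the lemma to completing $\vec{d}$ to a matrix with determinant $\equiv 1 \pmod{p^m}$, which you do by extending the unimodular row $\vec{d}/\gcd(\vec{d})$ to $SL_n(\ZZ)$ and rescaling two rows. The ingredients are the same in both cases (multilinearity, antisymmetry deduced from the vanishing relation, and $p^m$-torsion of $H^{n+1}_{p^m}(F)$), and your matrix is in effect a product of the paper's elementary moves; but your version has the advantage of treating all $n$ uniformly and rigorously in one step, making precise what the paper dismisses with ``the general argument readily follows,'' at the cost of the $SL_n(\ZZ)$ completion lemma and the Leibniz expansion. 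One shared caveat: both your argument (which multiplies the \emph{second} row by $h$) and the paper's (which starts from $n=2$) implicitly require $n\geq 2$; this is consistent with how the lemma is used in Theorem \ref{T3}, since for $n=1$ the statement as written would actually fail.
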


\begin{proof} We show it here for $n=2$ and the general argument readily follows. Using the relations laid out in the preliminary section, we can assume without loss of generality that $\gcd(d_1,p)=1$. This is  because $\omega \otimes \beta_1 \otimes \beta_2=\omega \otimes \beta_2 \otimes \beta_1^{-1}$ which enables moving $\beta_i$ with $d_i$ prime to $p$ to the appropriate location. Similarly, the rule $\omega\otimes \beta_1^{d_1} \otimes \beta_2^{c} = \omega \otimes \beta_1 \otimes \beta_2$ for $c$ with $d_1 c\equiv 1 \pmod{p^m}$ allows us to reduce to the case of $d_1=1$. Lastly, the rule $\omega \otimes \beta_1 \otimes \beta_2=\omega \otimes \beta_1 \beta_2^{d_2} \otimes \beta_2$ completes the picture.
\end{proof}

\begin{thm}\label{T3}
Given a field $F$ of $\operatorname{char}(F)=p>0$, the symbol length in $H_{p^{m-1}}^{n+1}(F)$ of a single symbol in $H_{p^m}^{n+1}(F)$ of exponent $p^{m-1}$ is at most $p^n$.
\end{thm}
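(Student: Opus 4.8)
The plan is to convert the exponent hypothesis into the Arason--Baeza norm condition of \Lref{ABlem}, and then to use that condition to break $A$ into $p^n$ pieces, each of which can be rewritten as a single symbol carrying a Witt vector with vanishing first coordinate (equivalently, a single symbol of $H_{p^{m-1}}^{n+1}(F)$).

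Write $A=\omega\otimes\beta_1\otimes\dots\otimes\beta_n$ with $\omega=(\omega_1,\dots,\omega_m)$, and set $\beta^{\vec d}:=\beta_1^{d_1}\cdots\beta_n^{d_n}$. First I would use the Witt identity $p\cdot(\omega_1,\dots,\omega_m)=(0,\omega_1^p,\dots,\omega_{m-1}^p)$, iterated $m-1$ times, to get $p^{m-1}A=(0,\dots,0,\omega_1^{p^{m-1}})\otimes\beta_1\otimes\dots\otimes\beta_n$. Via the injection of \eqref{e1} (with $\ell=m-1$) this is the image of $\omega_1^{p^{m-1}}\otimes\beta_1\otimes\dots\otimes\beta_n\in H_p^{n+1}(F)$, which by the Frobenius relation equals $\omega_1\otimes\beta_1\otimes\dots\otimes\beta_n$. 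As the injection is injective, $\exp(A)\mid p^{m-1}$ is equivalent to the vanishing of $\omega_1\otimes\beta_1\otimes\dots\otimes\beta_n$ in $H_p^{n+1}(F)$; in particular $A$ lies in the image of the injection $H_{p^{m-1}}^{n+1}(F)\hookrightarrow H_{p^m}^{n+1}(F)$ of \eqref{e1} that prepends a single zero, and bounding its symbol length in $H_{p^{m-1}}^{n+1}(F)$ amounts to rewriting $A$ as a sum of symbols whose Witt vectors have vanishing first coordinate. By \Lref{ABlem} this vanishing gives
\[\omega_1=\lambda^p-\lambda+\sum_{\vec d}z_{\vec d}^{\,p}\,\beta^{\vec d},\]
the sum running over $\vec d\in\{0,\dots,p-1\}^n\setminus\{\vec 0\}$, i.e. over $p^n-1$ terms.

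Next I would clear the Artin--Schreier part and split the remaining coordinate. Subtracting $(\lambda,0,\dots,0)^p-(\lambda,0,\dots,0)$, whose symbol is trivial and whose first coordinate is $\lambda^p-\lambda$, lets me assume $\omega_1=\sum_{\vec d}z_{\vec d}^{\,p}\beta^{\vec d}$. Since Witt addition keeps the first coordinate additive and alters only higher coordinates, I can write $\omega=\sum_{\vec d}(z_{\vec d}^{\,p}\beta^{\vec d},0,\dots,0)+(0,\rho_2,\dots,\rho_m)$ for suitable $\rho_i$, whence
\[A=\sum_{\vec d}(z_{\vec d}^{\,p}\beta^{\vec d},0,\dots,0)\otimes\beta_1\otimes\dots\otimes\beta_n+(0,\rho_2,\dots,\rho_m)\otimes\beta_1\otimes\dots\otimes\beta_n.\]
The last summand already has vanishing first coordinate, so it is a single symbol of $H_{p^{m-1}}^{n+1}(F)$; it remains to treat each of the $p^n-1$ Teichm\"uller terms $T_{\vec d}=(c,0,\dots,0)\otimes\beta_1\otimes\dots\otimes\beta_n$ with $c=z_{\vec d}^{\,p}\beta^{\vec d}$. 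Here I would apply \Lref{ourlemma2} to rewrite $T_{\vec d}=(c,0,\dots,0)\otimes\beta^{\vec d}\otimes\gamma_2\otimes\dots\otimes\gamma_n$, placing the monomial $\beta^{\vec d}$ in the first tensor slot. The Steinberg relation then gives $(c,0,\dots,0)\otimes c\otimes\gamma_2\otimes\dots\otimes\gamma_n=0$, and expanding $c=z_{\vec d}^{\,p}\cdot\beta^{\vec d}$ by multiplicativity in that slot yields
\[T_{\vec d}=-(c,0,\dots,0)\otimes z_{\vec d}^{\,p}\otimes\gamma_2\otimes\dots\otimes\gamma_n=-\bigl(p\cdot(c,0,\dots,0)\bigr)\otimes z_{\vec d}\otimes\gamma_2\otimes\dots\otimes\gamma_n,\]
where I use $\omega\otimes u^p\otimes\cdots=p\,(\omega\otimes u\otimes\cdots)=(p\cdot\omega)\otimes u\otimes\cdots$. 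Since $p\cdot(c,0,\dots,0)=(0,c^p,0,\dots,0)$ has vanishing first coordinate, each $T_{\vec d}$ is a single symbol of $H_{p^{m-1}}^{n+1}(F)$, and together with the remainder this gives symbol length at most $p^n$.

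The main obstacle I anticipate is this last step: one must verify that the factor of $p$ produced by pushing $z_{\vec d}^{\,p}$ into a tensor slot is precisely what lowers the support of the Witt vector, i.e. that multiplication by $p$ realises the Verschiebung $(c,0,\dots,0)\mapsto(0,c^p,0,\dots,0)$, and that \Lref{ourlemma2} legitimately moves the required monomial $\beta^{\vec d}$ into the first tensor position. The bookkeeping of the Witt-addition carries $\rho_i$ in the splitting step, though routine, likewise needs care so that the first coordinate is genuinely unaffected.
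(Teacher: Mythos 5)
Your proposal is correct and takes essentially the same route as the paper's own proof: reduce the exponent hypothesis to the vanishing of $\omega_1\otimes\beta_1\otimes\dots\otimes\beta_n$ in $H_p^{n+1}(F)$, apply Lemma~\ref{ABlem}, split off the $p^n-1$ Teichm\"uller terms (leaving one symbol with vanishing first Witt coordinate), and convert each of them into a single symbol of $H_{p^{m-1}}^{n+1}(F)$ via Lemma~\ref{ourlemma2}, the Steinberg relation, and the identity $p\cdot(c,0,\dots,0)=(0,c^p,0,\dots,0)$. Your treatment of that last Verschiebung step is in fact slightly more precise than the paper's, which writes $(0,z^p\gamma_1,0,\dots,0)$ with the Frobenius relation left implicit.
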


\begin{proof}
The case of $n=1$ coincides with \cite[Proposition 5]{MammoneMerkurjev:1991}. We continue with $n\geq 2$, although the proof could be adapted to suit the case of $n=1$ as well.
Consider the symbol $\omega \otimes \beta_1 \otimes \dots \otimes \beta_n$ in $H_{p^m}^{n+1}(F)$ and suppose that it is of exponent $p^{m-1}$.
Then $\omega_1 \otimes \beta_1 \otimes \dots \otimes \beta_n$ is trivial in $H_p^{n+1}(F)$.
By Lemma \ref{ABlem}, $$\omega_1=\lambda^p-\lambda+\sum_{\begin{matrix}0\leq d_1,\dots,d_n \leq p-1\\ (d_1,\dots,d_n)\neq (0,\dots,0)\end{matrix}} z_{d_1,\dots,d_n}^p \beta_1^{d_1} \dots \beta_n^{d_n}$$ for some $\lambda$ and $z_{d_1,\dots,d_n} \in F$.
Then $\omega \otimes \beta_1 \otimes \dots \otimes \beta_n$ can be written as 
\begin{eqnarray*}
\left(\omega-(\lambda^p,0,\dots,0)+(\lambda,0,\dots,0)-\sum (z_{d_1,\dots,d_n}^p \beta_1^{d_1}\dots \beta_n^{d_n},0,\dots,0)\right) \otimes \beta_1 \otimes \dots \otimes \beta_n+\\
\sum (z_{d_1,\dots,d_n}^p \beta_1^{d_1}\dots \beta_n^{d_n},0,\dots,0) \otimes \beta_1 \otimes \dots \otimes \beta_n.
\end{eqnarray*}
The first term has 0 in the first slot of the Witt vector, and thus is a single symbol in $H_{p^{m-1}}^{n+1}(F)$.
Now, each term $(z_{d_1,\dots,d_n}^p \beta_1^{d_1}\dots \beta_n^{d_n},0,\dots,0) \otimes \beta_1 \otimes \dots \otimes \beta_n$ can be written as
$(z_{d_1,\dots,d_n}^p \gamma_1,0,\dots,0) \otimes \gamma_1 \otimes \dots \otimes \gamma_n$ where $\gamma_1=\beta_1^{d_1}\dots \beta_n^{d_n}$ and some choice of $\gamma_2,\dots,\gamma_n$ by Lemma \ref{ourlemma2}. Thus, 

\begin{multline*}
(z_{d_1,\dots,d_n}^p \gamma_1,0,\dots,0) \otimes \gamma_1 \otimes \gamma_2 \otimes\dots \otimes \gamma_n=(z_{d_1,\dots,d_n}^p \gamma_1,0,\dots,0) \otimes (z_{d_1,\dots,d_n}^p \gamma_1) \otimes \gamma_2 \otimes \dots \otimes \gamma_n\\
-(z_{d_1,\dots,d_n}^p \gamma_1,0,\dots,0) \otimes z_{d_1,\dots,d_n}^p \otimes \gamma_2 \otimes \dots \otimes \gamma_n. 
\end{multline*}
The first term is 0 in the cohomology group. The second term, $-(z_{d_1,\dots,d_n}^p \gamma_1,0,\dots,0) \otimes z_{d_1,\dots,d_n}^p \otimes \gamma_2 \otimes \dots \otimes \gamma_n$, is the sum of $p$ copies of the single symbol $-(z_{d_1,\dots,d_n}^p \gamma_1,0,\dots,0) \otimes z_{d_1,\dots,d_n} \otimes \gamma_2 \otimes \dots \otimes \gamma_n$ in $H_{p^{m}}^{n+1}(F)$. Adding the Witt vectors instead of multiplying in the second slot gives $-(0, z_{d_1,\dots,d_n}^p \gamma_1,0,\ldots,0)\otimes z_{d_1,\dots,d_n} \otimes \gamma_2 \otimes \dots \otimes \gamma_n$. By Equation \ref{e1}, this symbol is $-(z_{d_1,\dots,d_n}^p \gamma_1,0,\ldots,0)\otimes z_{d_1,\dots,d_n} \otimes \gamma_2 \otimes \dots \otimes \gamma_n\in H^{n+1}_{p^{m-1}}$. This completes the proof.
\end{proof}

This bound is much better than the analogous bounds (with $n=2$ or $p=2$) in \cite{ChapmanFlorenceMcKinnie:2023} that increased with $m$.

\section{Sums of symbols in $H_{p^m}^2(F)$}

\begin{thm}\label{T4}
Suppose $A=\omega_1\otimes \beta_1+\cdots+\omega_r\otimes \beta_r$ in $H_{p^{m}}^2(F)$  satisfies $\exp(A)|p^{m-1}$, then the symbol length of $A$ in $H_{p^{m-1}}^2(F)$ is at most $p^r+r-1$.
\end{thm}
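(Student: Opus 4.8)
The plan is to imitate the proof of \Tref{T3}, replacing the single-symbol splitting criterion of \Lref{ABlem} by a \emph{joint} norm condition for the sum, and then to organize the resulting correction terms so that each monomial in the $\beta_i$ contributes exactly one symbol. First I would use the exact sequence \eqref{e1} (with $\ell=1$ and $m$ replaced by $m-1$) to translate the hypothesis $\exp(A)\mid p^{m-1}$ into a relation in $H_p^2(F)$: writing $\alpha_i$ for the first Witt coordinate of $\omega_i$, the condition says precisely that $\sum_{i=1}^r \alpha_i\otimes\beta_i=0$ in $H_p^2(F)$. As in \Tref{T3}, the reason for isolating the first coordinate is that subtracting from $\omega_i$ a Witt vector whose leading coordinate equals $\alpha_i$ produces a symbol lying in $H_{p^{m-1}}^2(F)$.

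The key step is a joint norm condition generalizing \Lref{ABlem}: the relation $\sum_i \alpha_i\otimes\beta_i=0$ in $H_p^2(F)$ should be equivalent to the existence of $\lambda_1,\dots,\lambda_r\in F$ and a \emph{single} family $z_{\vec d}\in F$, indexed by $\vec d\in\{0,\dots,p-1\}^r\setminus\{\vec 0\}$, such that, writing $\wp(\lambda)=\lambda^p-\lambda$,
\[
\alpha_i=\wp(\lambda_i)+\sum_{\vec d\neq \vec 0} d_i\, z_{\vec d}^{\,p}\,\beta^{\vec d}\quad\text{for every }i,\qquad \beta^{\vec d}:=\prod_{k=1}^r\beta_k^{d_k}.
\]
The essential feature is that the $z_{\vec d}$ are shared across all $i$: they are the coefficients of one ``potential'' $g=\sum_{\vec d} z_{\vec d}^{\,p}\,\beta^{\vec d}$, and the weight $d_i$ is the logarithmic derivative of $\beta^{\vec d}$ with respect to $\beta_i$. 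I would prove this using the Kato--Milne identification of $H_p^2(F)$ with $\Omega_F^1$ modulo $dF$ and the image of $\wp$: the relation reads $\sum_i\alpha_i\,\tfrac{d\beta_i}{\beta_i}=dg+\wp(\text{form})$, and expanding $g$ in the monomial basis attached to a $p$-basis of $F$ containing a maximal $p$-independent subset of the $\beta_i$ forces the coefficients to take exactly this shape. The $p$-dependent case is absorbed using \Lref{ourlemma}, which supplies the extra freedom.

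Granting the joint norm condition, I would set $B:=\sum_{\vec d\neq\vec 0}(z_{\vec d}^{\,p}\beta^{\vec d},0,\dots,0)\otimes\beta^{\vec d}$ and analyze $B$ and $A-B$ separately. Each of the $p^r-1$ summands of $B$ has the shape $(z^p\gamma,0,\dots,0)\otimes\gamma$ with $\gamma=\beta^{\vec d}$, and collapses to a single symbol of $H_{p^{m-1}}^2(F)$ by the computation already used in \Tref{T3}: write $\otimes\gamma=\otimes(z^p\gamma)-\otimes z^p$, discard the vanishing term $(z^p\gamma,0,\dots)\otimes(z^p\gamma)=0$, and convert $-(z^p\gamma,0,\dots)\otimes z^p=-p\,[(z^p\gamma,0,\dots)\otimes z]$ into one shifted (Verschiebung) symbol of $H_{p^{m-1}}^2(F)$. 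Thus $B$ contributes $p^r-1$ symbols. For $A-B$, rewriting $\otimes\beta^{\vec d}=\sum_i d_i(\otimes\beta_i)$ and collecting by $\beta_i$ yields $B=\sum_i W_i\otimes\beta_i$, where the leading coordinate of $W_i$ is $\sum_{\vec d}d_i z_{\vec d}^{\,p}\beta^{\vec d}=\alpha_i-\wp(\lambda_i)$. Hence each $\omega_i-W_i$ has leading coordinate $\wp(\lambda_i)$, and subtracting the zero element $\big[(\lambda_i^p,0,\dots,0)-(\lambda_i,0,\dots,0)\big]\otimes\beta_i$ (which vanishes by the Witt relation $(\omega^p-\omega)\otimes\beta_i=0$) turns $(\omega_i-W_i)\otimes\beta_i$ into a single symbol with zero leading coordinate, i.e.\ one symbol of $H_{p^{m-1}}^2(F)$. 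This gives $r$ symbols from $A-B$, for a total of $(p^r-1)+r=p^r+r-1$.

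The main obstacle is establishing the joint norm condition with the shared potential $g$ — in particular, showing that the relation in $H_p^2(F)$ forces the coefficients attached to monomials involving the complementary $p$-basis elements to vanish, and treating the $p$-dependent $\beta_i$ cleanly through \Lref{ourlemma}. A secondary subtlety, already present in \Tref{T3}, is keeping track of the difference between Witt-vector scaling and coordinatewise scaling of the leading slot; this is precisely why it is advantageous to keep $\beta^{\vec d}$ intact in the definition of $B$, so that the leading coordinates of $A-B$ emerge correctly up to the Artin--Schreier term $\wp(\lambda_i)$, rather than distributing $B$ over the individual $\beta_i$ from the outset.
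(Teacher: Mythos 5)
Your final bookkeeping is correct and, granting your ``joint norm condition,'' it does yield the bound: the $p^r-1$ summands of $B$ each collapse to one symbol of $H_{p^{m-1}}^2(F)$ (your step here is sound, since you keep $\gamma=\beta^{\vec d}$ intact so that $(z^p\gamma,0,\dots,0)\otimes(z^p\gamma)$ really is trivial), and the $r$ terms of $A-B$ each become one symbol because the first Witt coordinate of a sum is the sum of first coordinates. The gap is the joint norm condition itself, which is the entire substance of the theorem and is not proved in your proposal. Your sketch via the Kato--Milne presentation does not work as described: triviality of $\sum_i\alpha_i\otimes\beta_i$ in $H_p^2(F)$ means $\sum_i\alpha_i\,\frac{d\beta_i}{\beta_i}=dg+\sum_j(a_j^p-a_j)\frac{db_j}{b_j}$ for \emph{arbitrary} $g,a_j,b_j\in F$; neither $g$ nor the $b_j$ need be adapted to a $p$-basis containing the $\beta_i$, and exact forms $dg$ scramble monomials across the basis, so ``expanding $g$ in the monomial basis'' cannot by itself force the representation $\alpha_i=\wp(\lambda_i)+\sum_{\vec d}d_i z_{\vec d}^p\beta^{\vec d}$ with one shared family $z_{\vec d}$. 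Note how strong your condition is: it asserts that the whole relation module of the symbols $\otimes\beta_1,\dots,\otimes\beta_r$ in $H_p^2(F)$ is generated by Artin--Schreier relations and the Steinberg-type tuples $(d_1z^p\beta^{\vec d},\dots,d_rz^p\beta^{\vec d})$. This is far beyond \Lref{ABlem}, which is only its $r=1$ case; already for $p=2$, $r=2$ it is equivalent to a common-separable-slot theorem for isomorphic quaternion algebras in characteristic $2$, a nontrivial fact requiring its own proof.

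The paper avoids any simultaneous condition by inducting on $r$: it passes to the purely inseparable extension $L=F[t_1,\dots,t_{r-1}\,:\,t_i^p=\beta_i]$, over which the single symbol $\alpha_r\otimes\beta_r$ becomes trivial, applies \Lref{ABlem} over $L$, and descends by expanding the coefficients $x_k\in L$ in the monomial basis $t_1^{i_1}\cdots t_{r-1}^{i_{r-1}}$ (noting $L^p\subseteq F$ forces $\lambda\in F$). The resulting manipulation produces $1+(p-1)p^{r-1}$ symbols of $H_{p^{m-1}}^2(F)$ together with a leftover class $\tau_1\otimes\beta_1+\cdots+\tau_{r-1}\otimes\beta_{r-1}$ of exponent dividing $p^{m-1}$, handled by the induction hypothesis, giving $1+(p-1)p^{r-1}+(p^{r-1}+r-2)=p^r+r-1$. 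Your joint condition does appear to be true, and provable, but by exactly this mechanism rather than by $p$-basis expansion: base change to $L$ gives the representation of $\alpha_r$ with coefficients $z_{\vec d}$ ($d_r\neq 0$); then the corrected elements $\alpha_i'=\alpha_i-\sum_{d_r\neq0}d_iz_{\vec d}^p\beta^{\vec d}$ satisfy $\sum_{i<r}\alpha_i'\otimes\beta_i=0$ (because $\sum_{\vec d}z_{\vec d}^p\beta^{\vec d}\otimes\beta^{\vec d}=0$), and induction on $r$ supplies the shared coefficients for $\vec d$ with $d_r=0$. So the proposal is repairable, but the repair reproduces the paper's induction; as written, the central step is missing.
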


\begin{proof}
By induction on $r$. For $r=1$ this coincides with \cite[Proposition 5]{MammoneMerkurjev:1991}. Write $L=F[t_1,\dots,t_{r-1} : t_1^p=\beta_1,\dots,t_{r-1}^p=\beta_{r-1}]$, a purely inseparable field extension of exponent $p$.
Write $\alpha_1,\dots,\alpha_r$ for the initial slots of the Witt vectors $\omega_1,\dots,\omega_r$. Let $B = A^{p^{m-1}} = \alpha_1\otimes \beta_1+\cdots+\alpha_r\otimes \beta_r$. Since $\exp(A)| p^{m-1}$,  $B$ is trivial in $H_p^2(F)$, which means $\alpha_r \otimes \beta_r$ is trivial in $H_p^2(L)$.
Therefore, by Lemma \ref{ABlem},
$$\alpha_r=\lambda^p-\lambda+\sum_{k=1}^{p-1} x_k^p \beta_r^k$$
for some $\lambda,x_1,\dots,x_{p-1} \in L$. Note that all elements on both sides of the equality are clearly in $F$ except $\lambda$, which means $\lambda \in F$ as well.
Now, each $x_k$ is equal to $\sum_{i_1=0}^{p-1} \dots \sum_{i_{r-1}=0}^{p-1} z_{i_1,\dots,i_{r-1}} t_1^{i_1} \dots t_{r-1}^{i_{r-1}}$ for some $z_{i_1,\dots,i_{r-1}} \in F$. Therefore 
$$\alpha_r=\lambda^p-\lambda+\sum_{k=1}^{p-1} \sum_{i_1=0}^{p-1} \dots \sum_{i_{r-1}=0}^{p-1} z_{i_1,\dots,i_{r-1}}^p \beta_1^{i_1} \dots \beta_{r-1}^{i_{r-1}} \beta_r^k.$$
Write $A$ as 
$$A - \left((\lambda^p,0,\dots,0)-(\lambda,0,\dots,0)+\sum_{k=1}^{p-1} \sum_{i_1=0}^{p-1} \dots \sum_{i_{r-1}=0}^{p-1} (z_{i_1,\dots,i_{r-1}}^p \beta_1^{i_1} \dots \beta_{r-1}^{i_{r-1}} \beta_r^k,0,\dots,0)\right)\otimes \beta_r$$
$$+\sum_{k=1}^{p-1} \sum_{i_1=0}^{p-1} \dots \sum_{i_{r-1}=0}^{p-1} (z_{i_1,\dots,i_{r-1}}^p \beta_1^{i_1} \dots \beta_{r-1}^{i_{r-1}} \beta_r^k,0,\dots,0)\otimes \beta_r.$$

The term 
$$A - \left((\lambda^p,0,\dots,0)-(\lambda,0,\dots,0)+\sum_{k=1}^{p-1} \sum_{i_1=0}^{p-1} \dots \sum_{i_{r-1}=0}^{p-1} (z_{i_1,\dots,i_{r-1}}^p \beta_1^{i_1} \dots \beta_{r-1}^{i_{r-1}} \beta_r^k,0,\dots,0)\right)\otimes \beta_r$$ has 0 in the first slot of its Witt vector, hence can be written as a single symbol in $H_{p^{m-1}}^2(F)$.
Now, each $(z_{i_1,\dots,i_{r-1}}^p \beta_1^{i_1} \dots \beta_{r-1}^{i_{r-1}} \beta_r^k,0,\dots,0)\otimes \beta_r$ can be written as 
$$(z_{i_1,\dots,i_{r-1}}^p \beta_1^{i_1} \dots \beta_{r-1}^{i_{r-1}} \beta_r^k,0,\dots,0)\otimes z_{i_1,\dots,i_{r-1}}^p \beta_1^{i_1} \dots \beta_{r-1}^{i_{r-1}}\beta_r$$

$$-(z_{i_1,\dots,i_{r-1}}^p \beta_1^{i_1} \dots \beta_{r-1}^{i_{r-1}} \beta_r^k,0,\dots,0)\otimes z_{i_1,\dots,i_{r-1}}^p$$

$$-(z_{i_1,\dots,i_{r-1}}^p \beta_1^{i_1} \dots \beta_{r-1}^{i_{r-1}} \beta_r^k,0,\dots,0)\otimes \beta_1^{i_1} \dots \beta_{r-1}^{i_{r-1}}.$$
The first term of the three is trivial, and the second can be written as a single symbol in $H_{p^{m-1}}^2(F)$.
Altogether, we get that $A$ can be written as a sum of $1+(p-1)p^{r-1}$ symbols in $H_{p^{m-1}}^2(F)$ plus some  $B=\tau_1\otimes\beta+\dots+\tau_{r-1}\otimes\beta_{r-1} \in H_{p^m}^2(F)$ with $\exp(B)|p^{m-1}$. Thus, by the induction hypothesis, the symbol length of $B$ is at most $p^{r-1}+r-2$, and thus the symbol length of $A$ is at most $1+(p-1)p^{r-1}+p^{r-1}+r-2=p^r+r-1$.
\end{proof}

\begin{rem}
It is natural to compare the obtained bound to previous upper bounds from the literature. There is a mistake in \cite[Lemma 5.4 (a)]{ChapmanFlorenceMcKinnie:2023}: the argument treats $(z^p \beta,0,\dots,0)\otimes \beta$ as if it is trivial in $H_{p^m}^2(F)$, but it is not, it is merely a single symbol in $H_{p^{m-1}}^2(F)$. Fixing that, the bound obtained in \cite[Lemma 5.4 (a)]{ChapmanFlorenceMcKinnie:2023} is exactly $p$, just like in part $(b)$ of the same lemma and the corresponding result from \cite{MammoneMerkurjev:1991}. Taking that into consideration, the upper bound in \cite[Corollary 5.5 (a)]{ChapmanFlorenceMcKinnie:2023} on the symbol length in $H_{2^{m-1}}^2(F)$ of sums of two symbols in $H_{2^m}^2(F)$ is 6, whereas here we take it down to $2^2+1=5$. 
The upper bound in \cite[Corollary 5.5 (b)]{ChapmanFlorenceMcKinnie:2023} on the symbol length in $H_{3^{m-1}}^2(F)$ of sums of two symbols in $H_{3^m}^2(F)$ is should be similarly corrected to 15, whereas here it is $3^2+1=10$.
The bounds in \cite{ChapmanMcKinnie:2023} should be compared too: in \cite[Theorem 4.2]{ChapmanMcKinnie:2023} the argument suggests that the bound of the symbol length of $A$, a sum of four symbols in $H_{2^m}^2(F)$ of $\exp(A)|2^{m-1}$, is at most 8 times the symbol length of sums of two symbols in $H_{2^m}^2(F)$ with exponent dividing $2^{m-1}$, which means (taking the new bound into consideration) $8\cdot 5=40$ (and not 32 as written in the paper). However, the new bound that we present here is $2^4+4-1=19$, which is considerably lower. 
Similarly, the bound in \cite[Theorem 4.3]{ChapmanMcKinnie:2023} on the symbol length of  $A$, a sum of three symbols in $H_{2^m}^2(F)$ with $\exp(A)|2^{m-1}$, should be corrected to 15, whereas the new bound we present here suggests $2^3+3-1=10$. 
\end{rem}

\section{Sums of two symbols in $H_{2^m}^{n+1}(F)$}

In this section we make use of the isomorphism $H_2^{n+1}(F) \cong I_q^n F/I_q^{n+1} F$ given by 
\[
\alpha\otimes \beta_1\otimes \cdots \otimes \beta_n \mapsto 
\langle \! \langle \beta_n,\dots, \beta_1,\alpha ] \! ] 
\]
proven in \cite{Kato:1982}. For background on quadratic forms in the characteristic 2 case see \cite{EKM}. In this section the symbol $=$ between quadratic forms should be read as an `isometry' between the forms.

\begin{lem}[{\cite[Lemma 3.1]{ChapmanLevin:2024}}]\label{chainstep}
Given integers $n > k \geq 1$, a quadratic $n$-fold Pfister form $\psi$ and two quadratic $k$-fold factors $\varphi_1$ and $\varphi_2$, there exists a bilinear Pfister form $\rho$ for which $\psi=\rho \otimes \varphi_1$ and $\psi=\rho \otimes \varphi_2$.
\end{lem}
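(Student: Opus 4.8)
The plan is to induct on the number of bilinear slots $m := n-k$ separating the factors from $\psi$, reducing the whole statement to a single ``chain step'' that adjoins one common slot at a time. First I would dispose of the degenerate case: a quadratic Pfister form is isotropic if and only if it is hyperbolic, so if $\psi$ is isotropic I may take $\rho = \langle 1,1\rangle \otimes \langle 1,c_2\rangle \otimes \cdots \otimes \langle 1,c_m\rangle$, an isotropic (hence metabolic) bilinear Pfister form; then $\rho \otimes \varphi_1$ and $\rho \otimes \varphi_2$ are both hyperbolic of dimension $2^n$ and therefore isometric to $\psi$ (see \cite{EKM}). Hence I assume throughout that $\psi$ is anisotropic, which licenses Witt cancellation and the roundness $D(\psi) = G(\psi)$, and I will repeatedly use the characteristic-$2$ identity $-\varphi \cong \varphi$.

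The inductive engine is the following one-slot claim: if $\varphi_1,\varphi_2$ are quadratic $k$-fold Pfister factors of $\psi$ with $k < n$, then there is a single $c \in F^\times$ for which $\langle 1,c\rangle \otimes \varphi_1$ and $\langle 1,c\rangle \otimes \varphi_2$ are again factors of $\psi$, now $(k{+}1)$-fold. Granting this, I build $\rho$ one slot at a time: from $\varphi_1,\varphi_2$ I produce $c_1$, replace the pair by $\langle 1,c_1\rangle \otimes \varphi_i$, apply the claim again to obtain $c_2$, and so on. After $m = n-k$ steps both $\langle 1,c_1\rangle \otimes \cdots \otimes \langle 1,c_m\rangle \otimes \varphi_i$ are $n$-fold quadratic Pfister factors of the $n$-fold form $\psi$; being anisotropic subforms of $\psi$ of full dimension $2^n$, each equals $\psi$. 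Setting $\rho = \langle 1,c_1\rangle \otimes \cdots \otimes \langle 1,c_m\rangle$ then gives $\psi = \rho \otimes \varphi_1 = \rho \otimes \varphi_2$, as required.

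It remains to prove the one-slot claim, which is the heart of the matter. I would first translate ``$\langle 1,c\rangle \otimes \varphi_i$ is a factor of $\psi$'' into a representation condition: writing $\psi = \rho_i \otimes \varphi_i$ for some bilinear $(n{-}k)$-fold Pfister form $\rho_i$, the form $\langle 1,c\rangle \otimes \varphi_i$ is a factor exactly when $c$ lies in a value set $R_i \subseteq D(\psi)$ that, by the pure-subform theorem for bilinear Pfister forms in characteristic $2$ (see \cite{EKM}), is a union of cosets of $G(\varphi_i) = D(\varphi_i)$. When $k < n-1$ I expect the cleanest route to $R_1 \cap R_2 \neq \emptyset$ to be geometric: realize $\varphi_1,\varphi_2$ as the restrictions of $\psi$ to subspaces $V_1,V_2$, and locate a vector $w$ with $\psi(w) = c$ lying in the orthogonal complement of both $V_1$ and $V_2$ for the (nondegenerate) polar bilinear form of $\psi$. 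Since $\dim V_1 + \dim V_2 = 2^{k+1} < 2^n = \dim\psi$ in this range, a dimension count produces such a $w$, and it makes $c$ simultaneously complementary to both factors. This drives the induction down to the single remaining case $n-k = 1$.

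The main obstacle is precisely this base case $n-k=1$, where the dimension count fails ($V_1^\perp \cap V_2^\perp$ may be zero) and the characteristic-$2$ subtleties of the polar form concentrate. Here the goal is to exhibit one slot $c$ with $\psi = \langle 1,c\rangle \otimes \varphi_1 = \langle 1,c\rangle \otimes \varphi_2$. Writing $\psi = \langle 1,a_i\rangle \otimes \varphi_i$, one checks that the admissible slots for $\varphi_i$ form exactly the coset $a_i\,D(\varphi_i)$, so the claim becomes the concrete divisibility statement $a_1/a_2 \in D(\varphi_1)\cdot D(\varphi_2)$. I would attack this by a direct common-value argument: both $\varphi_1,\varphi_2$ represent $1$ and sit inside the anisotropic round form $\psi \cong \varphi_1 \perp a_1\varphi_1 \cong \varphi_2 \perp a_2\varphi_2$, and I would use roundness together with Witt cancellation to force a shared value realizing $a_1/a_2$ as a product of a value of $\varphi_1$ and a value of $\varphi_2$. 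Controlling this intersection of cosets of two \emph{different} subgroups of $D(\psi)$ is the step I expect to require the most care.
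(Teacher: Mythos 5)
Two remarks before the substance: the paper itself does not prove this lemma at all --- it imports it verbatim from \cite[Lemma 3.1]{ChapmanLevin:2024} --- so your attempt can only be judged on its own merits rather than against an in-paper argument. On those merits, your skeleton (induct on $n-k$ via a one-common-slot claim, dispose of the hyperbolic case separately, use roundness and Witt cancellation throughout) is the right one, but there is a genuine gap exactly at the point you yourself flag as ``the heart of the matter'': the case $n-k=1$ is never proved. Your reduction of that case is correct --- by Witt cancellation and roundness the admissible slots for $\varphi_i$ form the coset $a_iD(\varphi_i)$, so the claim is $a_1D(\varphi_1)\cap a_2D(\varphi_2)\neq\emptyset$ --- but ``use roundness together with Witt cancellation to force a shared value'' is a plan, not an argument, and it omits the one idea that makes the statement true: the hypothesis must be exploited in the Witt group. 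Concretely, in characteristic $2$ every nonsingular quadratic form $q$ satisfies $q\perp q\sim 0$, so $\varphi_1\perp a_1\varphi_1\cong\psi\cong\varphi_2\perp a_2\varphi_2$ yields the Witt equivalence
\[
a_1\varphi_1\perp a_2\varphi_2\ \sim\ \varphi_1\perp\varphi_2 .
\]
Since $\varphi_1$ and $\varphi_2$ both represent $1$, the right-hand form is isotropic, so its anisotropic part has dimension at most $2^n-2$; as Witt-equivalent nonsingular forms have isometric anisotropic parts, the left-hand form, of dimension $2^n$, is isotropic as well. An isotropic vector, combined with the anisotropy of $\varphi_1,\varphi_2$ (they are subforms of the anisotropic $\psi$), produces $d_1\in D(\varphi_1)$ and $d_2\in D(\varphi_2)$ with $a_1d_1=a_2d_2$, which is precisely the common slot $c$ you need. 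Without this (or an equivalent) step, the proposal is not a proof.

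There is a secondary gap in your range $k<n-1$: exhibiting one vector $w$ orthogonal to $V_1$ and $V_2$ with $\psi(w)=c$ only shows that $c$ is represented by the orthogonal complement $\eta_i\cong\sigma_i'\otimes\varphi_i$ of $V_i$ in $\psi$ (here $\sigma_i'$ is the pure part of a bilinear Pfister form $\sigma_i$ with $\psi\cong\sigma_i\otimes\varphi_i$). To conclude that $\langle 1,c\rangle\otimes\varphi_i$ is again a \emph{factor} of $\psi$ you need the slot-insertion theorem for quadratic Pfister forms in characteristic $2$: if $c\in D(\sigma'\otimes\varphi)$, then $\sigma\otimes\varphi\cong\langle\!\langle c\rangle\!\rangle\otimes\rho\otimes\varphi$ for some bilinear Pfister form $\rho$ of one lower fold. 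Your remark that the admissible set ``is a union of cosets of $D(\varphi_i)$'' neither states nor implies this; it is a genuine theorem that must be invoked. Finally, note that the Witt-equivalence argument above makes your case division unnecessary: for every $k<n$ one has $\eta_1\perp\eta_2\sim\varphi_1\perp\varphi_2$, the latter is isotropic because both $\varphi_i$ represent $1$, and a dimension count ($\dim(\eta_1\perp\eta_2)=2^{n+1}-2^{k+1}>2^{k+1}-2$) forces $\eta_1\perp\eta_2$ to be isotropic, giving a common value $c\in D(\eta_1)\cap D(\eta_2)$; slot-insertion then carries out the inductive step uniformly, with no special base case at all.
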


\begin{cor}\label{Chain}
If $\psi=\langle \! \langle a_1,\dots,a_n ] \! ] = \langle \! \langle b_1,\dots,b_n ] \! ]$, then there exist $c_1,\dots,c_{n-1} \in F^\times$ such that
$\varphi=\langle \! \langle c_1,\dots,c_i,a_{i+1},\dots,a_n ] \! ]=\langle \! \langle c_1,\dots,c_i,b_{i+1},\dots,b_n ] \! ]$ for any $i \in \{1,\dots,n-1\}$.
\end{cor}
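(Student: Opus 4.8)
The plan is to build the slots $c_1,\dots,c_{n-1}$ one at a time by induction, always taking $\varphi=\psi$. Since the index set $\{1,\dots,n-1\}$ is empty for $n=1$, I would assume $n\geq 2$. The induction maintains the invariant $I_i$: after $c_1,\dots,c_i$ have been produced,
\[
\psi=\langle\!\langle c_1,\dots,c_i,a_{i+1},\dots,a_n]\!]=\langle\!\langle c_1,\dots,c_i,b_{i+1},\dots,b_n]\!],
\]
where $I_0$ is exactly the hypothesis $\psi=\langle\!\langle a_1,\dots,a_n]\!]=\langle\!\langle b_1,\dots,b_n]\!]$. Once every $I_i$ holds (in particular $I_{n-1}$), the corollary follows with $\varphi=\psi$.

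For the inductive step I would assume $I_{i-1}$ with $1\leq i\leq n-1$; note that each of $a_i,b_i$ then sits in a bilinear slot, since only the $n$-th slot is the quadratic one. The key idea is to fold the already-fixed prefix into a codimension-one Pfister factor rather than leaving it in the complement. Concretely, set $\chi=\langle\!\langle c_1,\dots,c_{i-1},a_{i+1},\dots,a_n]\!]$ and $\chi'=\langle\!\langle c_1,\dots,c_{i-1},b_{i+1},\dots,b_n]\!]$, the quadratic $(n-1)$-fold Pfister forms obtained from the two representations in $I_{i-1}$ by deleting the single bilinear slot $a_i$, resp.\ $b_i$. Because the bilinear slots of a quadratic Pfister form may be permuted and split off, $I_{i-1}$ gives $\psi=\langle\!\langle a_i\rangle\rangle\otimes\chi$ and $\psi=\langle\!\langle b_i\rangle\rangle\otimes\chi'$, so $\chi$ and $\chi'$ are quadratic $(n-1)$-fold factors of $\psi$.

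Now I would apply \Lref{chainstep} with $k=n-1$ to the factors $\chi,\chi'$. It yields a bilinear Pfister form $\rho$ with $\psi=\rho\otimes\chi=\rho\otimes\chi'$; comparing dimensions ($\dim\psi=2\dim\chi$) forces $\rho$ to be $1$-fold, say $\rho=\langle\!\langle c_i\rangle\rangle$ for some $c_i\in F^\times$. Folding $c_i$ back in and permuting the bilinear slots into the intended order gives
\[
\psi=\langle\!\langle c_1,\dots,c_{i-1},c_i,a_{i+1},\dots,a_n]\!]=\langle\!\langle c_1,\dots,c_{i-1},c_i,b_{i+1},\dots,b_n]\!],
\]
which is $I_i$, completing the induction.

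The point that needs care — and what would otherwise be the main obstacle — is the compatibility of the new slot $c_i$ with the slots $c_1,\dots,c_{i-1}$ chosen earlier. A naive application of \Lref{chainstep} to the $(n-i)$-fold tail factors $\langle\!\langle a_{i+1},\dots,a_n]\!]$ and $\langle\!\langle b_{i+1},\dots,b_n]\!]$ returns an $i$-fold common complement with no reason to contain the previously built $\langle\!\langle c_1,\dots,c_{i-1}\rangle\rangle$, so the chains would fail to share a common prefix. The device above sidesteps this completely: by absorbing the fixed prefix into the factor $\chi$ and invoking the lemma only with $k=n-1$, the complement produced at each stage is a single new slot, so there is never any prefix left to preserve. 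The remaining verifications — that deleting one bilinear slot from an $n$-fold quadratic Pfister form leaves an $(n-1)$-fold factor whose complement is the deleted slot, and that bilinear slots commute — are standard facts on Pfister forms in characteristic $2$ (see \cite{EKM}).
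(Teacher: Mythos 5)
Your proof is correct and takes essentially the same route as the paper: both arguments induct on $i$, delete the single bilinear slot $a_i$ (resp.\ $b_i$) from the two representations to obtain two quadratic $(n-1)$-fold factors of $\psi$, and invoke Lemma~\ref{chainstep} with $k=n-1$ so that the common complement is a single new slot $\langle\!\langle c_i \rangle\!\rangle$. The only difference is presentational: you make explicit the dimension count forcing $\rho$ to be $1$-fold, which the paper leaves implicit by writing $\rho=\langle\!\langle c_i\rangle\!\rangle$ outright.
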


\begin{proof}
By induction. For each $i\in \{1,\dots,n-1\}$, if we already have $\langle \! \langle c_1,\dots,c_{i-1},a_i,\dots,a_n ] \! ]=\langle \! \langle c_1,\dots,c_{i-1},b_i,\dots,b_n ] \! ]$, then set 
\begin{eqnarray*}
\varphi_1&=&\langle \! \langle c_1,\dots,c_{i-1},a_{i+1},\dots,a_n ] \! ], \quad \text{and}\\ \varphi_2&=&\langle \! \langle c_1,\dots,c_{i-1},b_{i+1},\dots,b_n ] \! ].
\end{eqnarray*}
 Then by Lemma \ref{chainstep}, there exists $\rho=\langle \! \langle c_i \rangle \! \rangle$ for which $\psi=\rho \otimes \varphi_1=\rho \otimes \varphi_2$, which means 
$\langle \! \langle c_1,\dots,c_i,a_{i+1},\dots,a_n ] \! ]=\langle \! \langle c_1,\dots,c_i,b_{i+1},\dots,b_n ] \! ]$.
\end{proof}

\begin{thm}\label{T5}
Let $A=\omega\otimes\beta_1\otimes\dots\otimes\beta_n-\tau\otimes \delta_1\otimes \dots\otimes \delta_n \in H_{2^m}^{n+1}(F)$ be a class with $\exp A|2^{m-1}$. Then the symbol length of $A$ in $H_{2^{m-1}}^{n+1}(F)$ is at most $(2n+1)2^n$. 
\end{thm}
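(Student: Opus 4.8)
The plan is to reduce $A$ modulo its leading Witt slot to a statement about quadratic Pfister forms, use Corollary~\ref{Chain} to join the two resulting forms by a short chain in which consecutive forms differ in a single slot, lift this chain back to $H_{2^m}^{n+1}(F)$, and absorb each elementary step with the single-symbol bound of Theorem~\ref{T3}.

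Write $\alpha$ and $\sigma$ for the leading slots of $\omega$ and $\tau$ and put $N=n+1$. Since $\exp(A)\mid 2^{m-1}$, the class $A$ lies in the kernel of the surjection $H_{2^m}^{n+1}(F)\to H_2^{n+1}(F)$ of \eqref{e1}; as this map records the leading Witt slot, the difference $\alpha\otimes\beta_1\otimes\cdots\otimes\beta_n-\sigma\otimes\delta_1\otimes\cdots\otimes\delta_n$ vanishes in $H_2^{n+1}(F)$. Through the isomorphism $H_2^{n+1}(F)\cong I_q^nF/I_q^{n+1}F$ this means the two Pfister forms $\pi_1=\langle\!\langle\beta_n,\dots,\beta_1,\alpha]\!]$ and $\pi_2=\langle\!\langle\delta_n,\dots,\delta_1,\sigma]\!]$, each with $N$ slots, represent the same class; since a Pfister form is determined up to isometry by the class it represents (cf.\ \cite{EKM}), they are isometric, and I write $\psi$ for this common form.

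Now I would apply Corollary~\ref{Chain} to $\psi=\langle\!\langle a_1,\dots,a_N]\!]=\langle\!\langle b_1,\dots,b_N]\!]$, where $(a_1,\dots,a_N)=(\beta_n,\dots,\beta_1,\alpha)$ and $(b_1,\dots,b_N)=(\delta_n,\dots,\delta_1,\sigma)$, the last slot being the quadratic one. This yields $c_1,\dots,c_{N-1}\in F^\times$ for which every partially replaced form $\phi_i=\langle\!\langle c_1,\dots,c_i,a_{i+1},\dots,a_N]\!]=\langle\!\langle c_1,\dots,c_i,b_{i+1},\dots,b_N]\!]$ (for $0\le i\le N-1$) is again isometric to $\psi$; hence any two consecutive entries of the list $\pi_1=\phi_0,\phi_1,\dots,\phi_{N-1}$ are isometric and differ in exactly one slot. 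I then lift the chain to $H_{2^m}^{n+1}(F)$: along the $a$-presentations I keep the Witt vector $\omega$ and set $\Sigma_i^a=\omega\otimes(\text{the bilinear slots of }\phi_i)$, along the $b$-presentations I keep $\tau$ and set $\Sigma_i^b=\tau\otimes(\text{the bilinear slots of }\phi_i)$, and I bridge the two halves at $i=N-1$, where all bilinear slots equal $c_1,\dots,c_{N-1}$ and only the quadratic slot changes from $\alpha$ to $\sigma$, i.e.\ $\omega$ is replaced by $\tau$. Telescoping gives $A=\Sigma_0^a-\Sigma_0^b$ as a sum of $2N-1=2n+1$ differences, each of two symbols that agree in all slots but one (or differ only in their Witt vector at the bridge). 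By the multiplicativity relations every such difference collapses to a single symbol of $H_{2^m}^{n+1}(F)$, and because the two neighbouring first-slot forms are isometric its leading slot is trivial in $H_2^{n+1}(F)$, so it has exponent dividing $2^{m-1}$.

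Finally I would invoke Theorem~\ref{T3} on each of these $2n+1$ single symbols of exponent dividing $2^{m-1}$, bounding the symbol length of each by $2^n$ in $H_{2^{m-1}}^{n+1}(F)$ and hence that of $A$ by $(2n+1)2^n$. I expect the main obstacle to be the reduction step: upgrading the equality of cohomology classes to an honest isometry $\pi_1\cong\pi_2$ (the injectivity of the degree-$N$ cohomological invariant on Pfister forms in characteristic $2$), together with the bookkeeping that the quadratic slot, which originates from the Witt vector, is precisely the one kept until the bridging step, so that replacing it is the single substitution $\omega\mapsto\tau$ rather than a tensor-slot move. A minor point to record is that Theorem~\ref{T3}, though stated for exponent exactly $2^{m-1}$, applies verbatim to exponent dividing $2^{m-1}$, as its proof only uses triviality of the leading slot in $H_2^{n+1}(F)$.
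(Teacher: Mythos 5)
Your proof is correct and follows essentially the same route as the paper: reduce the leading Witt slot, via Kato's isomorphism, to an isometry of $(n+1)$-fold quadratic Pfister forms, connect the two presentations by the common-slot chain of Corollary~\ref{Chain}, telescope $A$ into differences each of which collapses (by multiplicativity in the slots) to a single symbol of exponent dividing $2^{m-1}$, and bound each such symbol by $2^n$ in $H_{2^{m-1}}^{n+1}(F)$ using Theorem~\ref{T3}, giving $(2n+1)2^n$ in total. If anything, your placement of the bridge (switching $\omega$ to $\tau$ only after \emph{all} bilinear slots have been replaced by the common $c_1,\dots,c_n$) is exactly the arrangement that Corollary~\ref{Chain} literally licenses and produces exactly $2n+1$ links; the paper's displayed chain instead keeps $\omega$ until its final line and swaps $\beta_n$ directly for $\delta_n$ in the middle, which tacitly asserts intermediate identities with $\omega_1$ on both sides that the corollary does not state verbatim, so your version is the more careful implementation of the same argument.
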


\begin{proof}
By Corollary \ref{Chain}, there exist $\gamma_1,\dots,\gamma_n \in F^\times$ for which $\omega_1 \otimes \gamma_1\otimes \cdots \otimes \gamma_i \otimes \beta_{i+1} \otimes \dots \otimes \beta_n=\omega_1 \otimes \gamma_1 \otimes \cdots \otimes \gamma_i \otimes \delta_{i+1} \otimes \dots \otimes \delta_n \in H^{n+1}_2(F)$ for each $i\in \{1,\dots,n-1\}$.

Write $A$ as 
\begin{eqnarray*}
\omega\otimes\beta_1\otimes\dots\otimes\beta_n &-& \omega\otimes \gamma_1 \otimes \beta_2 \otimes \dots\otimes \beta_n \\
+\omega\otimes \gamma_1 \otimes \beta_2 \otimes \dots\otimes \beta_n&-&\omega\otimes \gamma_1 \otimes \gamma_2 \otimes \beta_3 \otimes \dots\otimes \beta_n\\
&\vdots& \\
+\omega\otimes \gamma_1 \otimes \dots \otimes \gamma_{n-2}  \otimes \beta_{n-1} \otimes \beta_n&-&\omega\otimes \gamma_1 \otimes \dots \otimes \gamma_{n-1} \otimes \beta_n\\
+\omega\otimes \gamma_1 \otimes \dots \otimes \gamma_{n-1} \otimes \beta_n & - & \omega\otimes \gamma_1 \otimes \dots \otimes \gamma_{n-1} \otimes \delta_n\\
+\omega\otimes \gamma_1 \otimes \dots \otimes \gamma_{n-1} \otimes \delta_n& -& \omega\otimes \gamma_1 \otimes \dots \otimes \gamma_{n-2}  \otimes \delta_{n-1} \otimes \delta_n\\
& \vdots &\\
+\omega\otimes \gamma_1 \otimes \gamma_2 \otimes \delta_3 \otimes \dots\otimes \delta_n &-& \omega\otimes \gamma_1 \otimes \delta_2 \otimes \dots\otimes \delta_n\\
+\omega\otimes \gamma_1 \otimes \delta_2 \otimes \dots\otimes \delta_n &-&\omega\otimes \delta_1 \otimes \dots\otimes \delta_n\\
+\omega\otimes \delta_1 \otimes \dots\otimes \delta_n &-&\tau\otimes \delta_1 \otimes \dots\otimes \delta_n.
\end{eqnarray*}
Each of the lines is an expression that can be written as a single symbol in $H_{2^m}^{n+1}(F)$ (because they share all slots except for one) of exponent dividing $2^{m-1}$ because their $2^{m-1}$th powers are isometric Pfister forms. Thus, the symbol length of the expression in each line in $H_{2^{m-1}}^{n+1}(F)$ is at most $2^n$ by Theorem \ref{T3}. Since there are $2n+1$ lines, the symbol length of $A$ in $H_{2^{m-1}}^{n+1}(F)$ is at most $(2n+1)2^n$.
\end{proof}

\bibliographystyle{abbrv}
\bibliography{bibfile}

\end{document}